%%%%%%%%%%%%%%%%%%%%%%%%%%%%%%%%%%%%%%%%%%%%%%%%%%%%%%%%%%%%%%%%%%%%%%%%%%%%%%%%
%2345678901234567890123456789012345678901234567890123456789012345678901234567890
%        1         2         3         4         5         6         7         8

\documentclass[letterpaper, 10 pt, conference]{IEEEtran}  % Comment this line out if you need a4paper

\usepackage{url}
\usepackage{array,tabularx}
\usepackage{multicol, nonfloat}  %% for two columns. NEVER REMOVE!!
\usepackage{epsfig} % for postscript graphics files
\usepackage{amssymb}  % assumes amsmath package installed
\usepackage{amsthm}
\newtheorem{theorem}{Theorem}[section]

\newtheorem{lemma}[theorem]{Lemma}
\newtheorem{remark}{Remark}

\newtheorem{proposition}[theorem]{Proposition}
\usepackage{blindtext}
\usepackage[utf8]{inputenc}	
\usepackage[english]{babel}
\usepackage{graphicx}
\usepackage{amsmath}
\usepackage{amsfonts}
\usepackage{epstopdf}
\usepackage{cite}
\usepackage{microtype}

\usepackage{color}
\newcommand{\norm}[1]{\| #1 \|}

\newcommand{\quotes}[1]{``#1''}

\title{\LARGE \bf
Primal-Dual Gradient Flow Algorithm for Distributed Support Vector Machines}

\author{Prashant Bansode, Sushant Bahadure, Navdeep Singh% <-this % stops a space
%\thanks{*This work was not supported by any organization}% <-this % stops a space
%\thanks{!}%
\thanks{Prashant Bansode is with Department of Instrumentation Engineering, Ramrao Adik Institute of Technology, Mumbai, 400706 India.~{\tt\small prashant.bansode@rait.ac.in}}
\thanks{Sushant Bahadure is with Cognizant Technology Solutions, Mumbai, 400076 India.}
\thanks{Navdeep Singh is with department of Electrical Engineering, Veermata Jijabai Technological Institute, Mumbai, 400019 India.}
}
\begin{document}

\maketitle
\thispagestyle{empty}
\pagestyle{empty}

%%%%%%%%%%%%%%%%%%%%%%%%%%%%%%%%%%%%%%%%%%%%%%%%%%%%%%%%%%%%%%%%%%%%%%%%%%%%%%%%
\begin{abstract}
	In this paper, a primal-dual gradient flow algorithm for distributed support vector machines (DSVM) is proposed. A network of computing nodes, each carrying a subset of horizontally partitioned large dataset is considered. The nodes are represented as dynamical systems with Arrow-Hurwicz-Uzawa gradient flow dynamics, derived from the Lagrangian function of the DSVM problem. It is first proved that the nodes are passive dynamical systems. Then, by employing the Krasovskii type candidate Lyapunov functions, it is proved that the computing nodes asymptotically converge to the optimal primal-dual solution. 
\end{abstract}

%%%%%%%%%%%%%%%%%%%%%\textbf{Introduction}%%%%%%%%%%%%%%%%%%%%%%%%%%%%%%%%%%%%%%%%%%%%%%%%%%%%%%%%%%%
\section{Introduction}
Support vector machines (SVMs) are supervised learning based paradigms in machine learning, used for classification and regression analysis on raw data\cite{cortes1995support}. Lately, SVMs have gained wide interest in data analytic applications in health-care, power grid, automotive industries etc.,\cite{kumar2010data,nguyen1996classification,terzic2010fluid}. It enables information processing from raw data and help make crucial decisions for the future; using convex optimization based algorithms\cite{boyd2004convex}. However, for applications with huge amount of data, there are often constraints with respect to bandwidth requirement, data storage and processing capability of the computing device, response time, etc. Distributed versions of support vector machines have been proposed as an alternative method to tackle these constraints, as discussed in\cite{stolpe2016distributed,forero2010consensus,wang2012distributed}. The early work of \cite{forero2010consensus} reports the consensus based DSVM technique. In such techniques, the large dataset is partitioned into small datasets and distributed to the computing nodes within the network, wherein each node process the data independent of each other. \cite{stolpe2016distributed} provides an overview of existing distributed support vector machines techniques and proposes a similar technique with horizontally partitioned large datasets. It decomposes the original convex problem into a set of convex-sub problems cast into a distributed alternating method of multipliers framework\cite{boyd2011distributed}, wherein the computing nodes exchange the optimization variables with their neighboring nodes and reach consensus on the optimal solution. This technique makes the algorithm more communication efficient since only the optimization variables are exchanged between computing nodes instead of the support vectors, which may be large in number.

\subsection{Relevant Literature}\label{rl}
In \cite{nedic2009distributed}, authors proposed a distributed subgradient method for optimizing a sum of convex objective functions corresponding to multiagent systems. Recently, the Arrow-Hurwicz-Uzawa gradient flow dynamics based algorithms have become much popular for solving distributed optimization problems\cite{yi2015distributed,simpson2016input,kosaraju2018stability,ding2018primal}. It provides a control-theoretic flavor to the convex optimization problems. Having said that, the optimal solution of the convex optimization problem becomes equivalent to the equilibrium point of such dynamics. \cite{yi2015distributed} proposes this algorithm for distributed convex optimization problem with application to load sharing in power systems. \cite{kosaraju2018stability} integrates Brayton-Moser framework with this approach to solve the constrained convex optimization problem and demonstrates its efficacy by considering an application of building temperature control. Both consider constrained convex optimization problem, and represent the gradient flow dynamics as a switched dynamical system. Here, the asymptotic convergence and stability of the gradient algorithm is proved using properties of hybrid systems theory of\cite{lygeros2003dynamical}. \cite{simpson2016input} considers a convex optimization problem only with equality constraints but gives an exhaustive treatment to these problems. \cite{ding2018primal} utilizes this algorithm for a special case of optimization problems, called distributed resource allocation. 

\subsection{Motivation and Contribution} 
In \cite{kosaraju2018stability}, authors use the passivity property with differentiation of input and output port variables\cite{kosaraju2017control}, to prove the asymptotic convergence of primal-dual dynamics to the optimal solution of the convex optimization problem. This formulation is later applied to a linear support vector machines problem in \cite{kosaraju2018primal}. As discussed already, a single computing machine is inefficient in dealing with SVM algorithm with large datasets. Motivated by the distributed convex optimization techniques discussed in Subsection \ref{rl}, the work presented in this paper intends to develop a \quotes{primal-dual gradient flow algorithm} for distributed support vector machines, much in the spirit of \cite{stolpe2016distributed}. 

The content is organized as follows: Some preliminaries on centralized SVMs and DSMs are presented in Section \ref{prel}.  Section \ref{lf} presents the Lagrangian formulation of the underlying problem and Section \ref{pd} presents the primal-dual gradient flow dynamics of the Lagrangian problem. Section \ref{passive_pd} explores the passivity properties of the dynamics while Section \ref{gb} presents the asymptotic stability of the dynamics. Section \ref{concl} concludes the paper.
\section{Preliminaries}\label{prel}
\subsection{Support Vector Machine problem}
A centralized support vector machine for the case of non-separable data is given below:
\begin{align}
\begin{aligned}
&\min~\frac{1}{2}\norm{w}^2+mC\sum_{i=1}^{n}\xi_{i}\\
&\mathrm{s.t.}~ y_i(w^Tx_i+b)\geq 1-\xi_{i},\forall^n_{i=1},	
\end{aligned}\label{cent}
\end{align}
where $\frac{1}{\norm{w}}$ is the margin that separates positive and negative observations, $(x_i,y_i)\in S$ is a paired observation sample, and $w,b$ are weight and bias variables, respectively. $1-\xi_{i}-y_{i}(w^Tx_{i}+b)$ is called as a hinge loss function. C is used to trade off the sum over all slack variables $\xi$ against the size of the margin. $m>0$ is the scaling factor.
\subsection{Distributed Support Vector Machines}
\subsubsection{Data Partitioning}
It is assumed that the set of observations is horizontally partitioned among a number of nodes in a network, and the underlying optimization problem is solved in a distributed fashion by only exchanging the optimization variables with local nodes\cite{stolpe2016distributed}. 
Consider a network of computing nodes modeled by an undirected graph $G(P,E)$, where vertices $P= \{1,\ldots,m\}$ represent nodes and the set of edges $E$ describes communication links between them. Assuming that the graph is connected and enabling one-hop neighborhood communication, each node $j$ communicates with its neighbors belonging to $\mathcal{N}_j \in P$. Each node $j\in P$ stores a sample of $S_j=\{(x_{j1},y_{j1}),\ldots,(x_{jn},y_{jn})\}$ of labeled observations.
Note that:
\begin{itemize}
	\item $S_j$ is a set of labeled observations allocated to $j^{th}$ machine, $S_j \in S$, where $S$ is a big data set.
	\item $x_{j} \in \Re^{n_j\times 1}$.
	\item $y_{ji} \in \{-1,+1\}$ is a class label.
\end{itemize}
\subsubsection{Convex Optimization Formulation of Distributed Support Vector Machines\cite{stolpe2016distributed}}
The distributed version of the centralized support vector machines \eqref{cent} is given below:
\begin{align}
\begin{aligned}\small
\min~&\frac{1}{2}\sum_{j=1}^{m}\norm{w_j}^2+mC\sum_{j=1}^{m}\sum_{i=1}^{n_j}\xi_{ji}\\
\mathrm{s.t.}&~y_{ji}(w_jx_{ji}+b_j)\geq 1-\xi_{ji},\xi_{ji}\geq 0,\forall j\in P,i=1,\dots,n_j,\\
w_j&=w_l,b_j=b_l,~\forall j\in P,l\in\mathcal{N}_j.
\end{aligned}\label{dist}
\end{align}
The objective function $F:\Re^m\rightarrow \Re$ is continuously differentiable $(C^1)$ and strictly convex. The optimization variables $w,b \in \Re^m$, and $w_j=w_l,b_j=b_l$ are their coupling constraints, where $l$ is a neighbor of $j$ if and only if $l \in \mathcal{N}_j$. Let $h_{ji}(\xi_{ji},w_j,b_j)=1-\xi_{ji}-y_{ji}(w_jx_{ji}+b_j)$.
\section{Problem Formulation}\label{pf}
\subsection{Lagrangian Problem Formulation of Distributed Support Vector Machines}\label{lf}
The Lagrangian function associated with the optimization problem \eqref{dist} is:  
\begin{align}
\begin{aligned}\small 
\mathcal{L}(w,b,\xi,\theta,\mu,\alpha,\beta)=&\frac{1}{2}\norm{w}^2+mC\sum_{j=1}^{m}\sum_{i=1}^{n_j}\xi_{ji}\\&+\alpha^TLw+\beta^TLb\\&+\sum_{j=1}^{m}\sum_{i=1}^{n_j}\theta_{ji}h_{ji}(\xi_{ji},w_j,b_j)\\&+\sum_{j=1}^{m}\sum_{i=1}^{n_j}\mu_{ji}\xi_{ji}+\frac{1}{2}w^TLw+\frac{1}{2}b^TLb
\end{aligned}\label{lagr}
\end{align}
where $\theta_{ji},\mu_{ji}$ are the Lagrange multipliers associated with inequalities $h_{ji}(\xi_{ji},w_j,b_j)$ and $\xi_{ji}\geq 0$, of $j^{th}$ computing node, and $\alpha_j,\beta_j$ are the Lagrange multipliers associated with coupling constraints of $j^{th}$ and $l^{th}, \forall l \in \mathcal{N}_j$ nodes. $L$ is the Laplacian matrix of the undirected graph $G$. The corresponding Lagrangian dual problem of \eqref{dist} is stated as follows:
\begin{align}
\begin{aligned}
&\max_{\alpha,\beta,\theta,\mu}~\min_{w,b,\xi} \mathcal{L}(w,b,\theta,\alpha,\beta)\\
&\mathrm{s.t.}~\theta_{ji} \geq 0,\mu_{ji} \geq 0,\alpha_j\geq 0,\beta_j\geq 0, \forall i = 1,\ldots, n_j,\forall j \in P.
\end{aligned}\label{dual}
\end{align} 
\begin{remark}
Since the primal problem \eqref{dist} is convex, it is assumed that strong duality holds for \eqref{dist}-\eqref{dual} and Slater's condition is satisfied. The pair $(w^*,b^*)$ is an optimal solution to \eqref{dist} if there exist $(\theta^*, \alpha^*,\beta^*)$ such that the following Karush-Kuhn-Tucker (KKT) conditions are satisfied. 
\end{remark}
\begin{align}\small
\begin{aligned}
&\nabla \mathcal{L}(w^*,b^*,\xi^*_{ji},\mu^*_{ji},\theta^*,\alpha^*,\beta^*)=\mathbf{0},\\
&h_{ji}(\xi^*_{ji},w^*_j,b^*_j)\leq 0,\forall j\in P,i=1,\dots,n_j,\\
&\alpha^*_j,\beta^*_j,\theta^*_{ji} \geq 0,\mu^*_{ji} \geq 0,\forall j\in P,i=1,\dots,n_j,\\
&\theta^*_{ji}h_{ji}(\xi^*_{ji},w^*_j,b^*_j)=0,\xi^*_{ji}\mu^*_{ji}=0,\\
&w^*_j = w^*_l,b^*_j = b^*_l,\forall j\in P,l\in\mathcal{N}_j.\\
\end{aligned}\label{kkt}
\end{align}
\subsection{Primal-dual dynamics}\label{pd}
The Arrow-Hurwicz-Uzawa gradient flow dynamics for the Lagrangian \eqref{lagr} are derived as follows:
\begin{align}
primal:\begin{cases}
\begin{aligned}\small
\dot{w}_j&=-\nabla_{w_j} \mathcal{L}(w,b,\xi,\theta,\mu,\alpha,\beta),\\
\dot{b}_j&=-\nabla_{b_j} \mathcal{L}(w,b,\xi,\theta,\mu,\alpha,\beta),\\
\dot{\xi}_{ji}&=-\nabla_{\xi_{ji}} [\mathcal{L}(w,b,\xi,\theta,\mu,\alpha,\beta)]^+_{\xi_{ji}},\\
\end{aligned}\label{pdd1}
\end{cases}
\end{align}
\begin{align}
dual:\begin{cases}
\begin{aligned}
\dot{\alpha}_j&=\nabla_{\alpha_j} \mathcal{L}(w,b,\xi,\theta,\mu,\alpha,\beta),\\
\dot{\beta}_j&=\nabla_{\beta_j} \mathcal{L}(w,b,\xi,\theta,\mu,\alpha,\beta),\\
\dot{\theta}_{ji}&=\nabla_{\theta_{ji}} [\mathcal{L}(w,b,\xi,\theta,\mu,\alpha,\beta)]^+_{\theta_{ji}},\\
\dot{\mu}_{ji}&=\nabla_{\mu_{ji}} [\mathcal{L}(w,b,\xi,\theta,\mu,\alpha,\beta)]^+_{\mu_{ji}}\label{pdd2}
\end{aligned}
\end{cases}
\end{align}
Corresponding to \eqref{pdd1}-\eqref{pdd2}, the primal-dual dynamics for $j^{th}$ node in the network is given as follows:
\begin{align}\small
primal:
\begin{aligned}
\begin{cases}
\dot{w}_j=&-w_j-\sum_{i=1}^{n_j}\theta_{ji}(-y_{ji}x_{ji})\\
&-\sum_{l\in \mathcal{N}_j}^{}(\alpha_j-\alpha_l)-\sum_{l\in \mathcal{N}_j}(w_j-w_l),\\
\dot{b}_j=&-\sum_{i=1}^{n_j}\theta_{ji}(-y_{ji})-\sum_{l\in \mathcal{N}_j}^{}(\beta_j-\beta_l)\\
&-\sum_{j \in \mathcal{N}_j}^{}(b_j-b_l),\\
\dot{\xi}_{ji}=&[-mC-\mu_{ji}+\theta_{ji}]^+_{\xi_{ji}}
\end{cases}
\end{aligned}\label{dyn11}
\end{align}
\begin{align}
\begin{aligned}dual:
\begin{cases}
\dot{\theta}_{ji}=&[h_{ji}(\xi_{ji},w_j,b_j)]^+_{\theta_{ji}},\\
\dot{\mu}_{ji}=&[\xi_{ji}]^+_{\mu_{ji}},\\
\dot{\alpha}_j =& \sum_{l\in \mathcal{N}_j}(w_j-w_l),\\
\dot{\beta}_j =& \sum_{l\in \mathcal{N}_j}(b_j-b_l).
\end{cases}
\end{aligned}\label{dyn12}
\end{align}
\subsubsection{Partitioned Primal-dual dynamics}
\begin{figure}[t]
	\centering
	\includegraphics[width=2.65in]{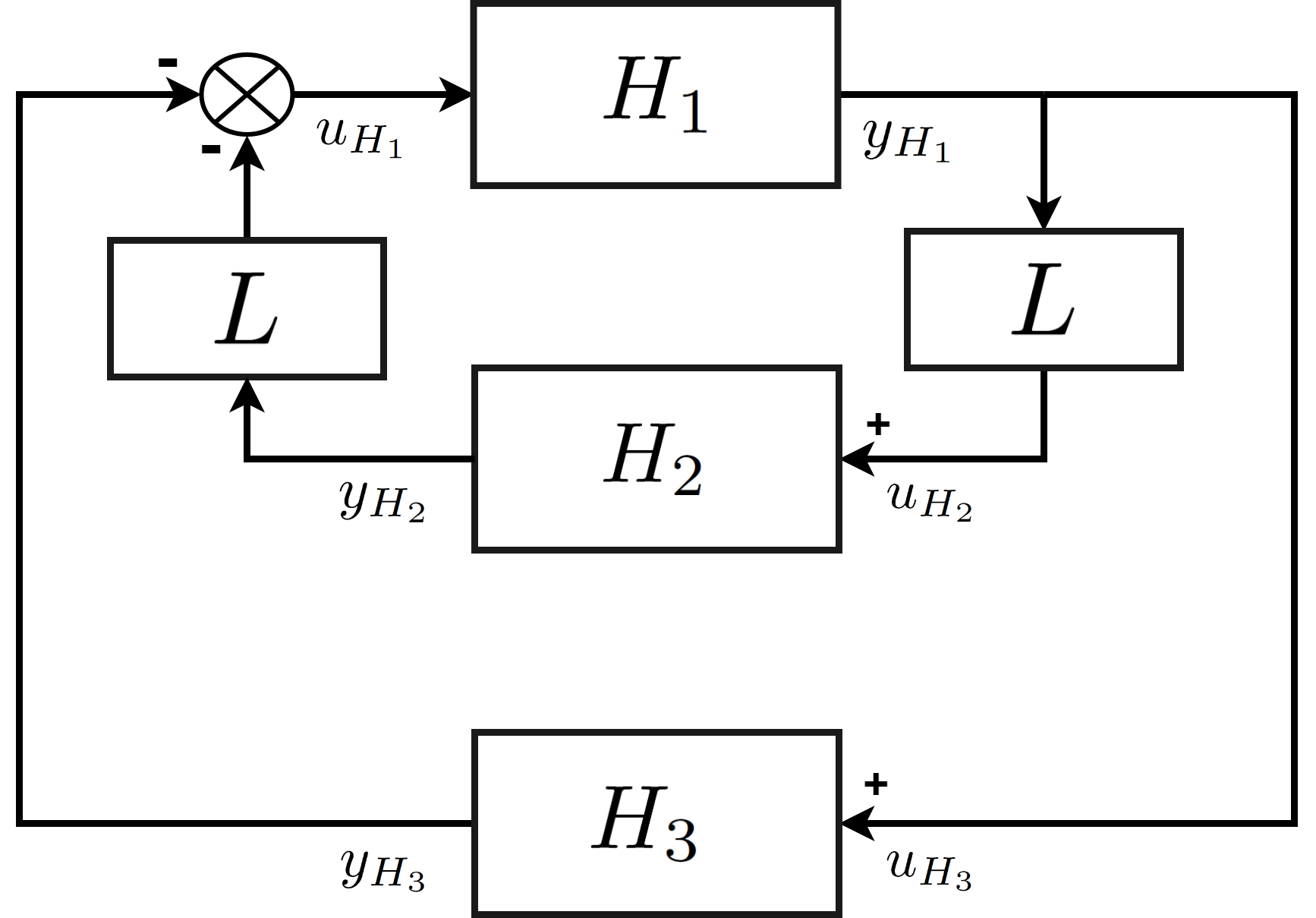}
	\caption{Interconnected structure of $H_1, H_2$ and $H_3$ representing the network dynamics.}
	\label{interconnect}
\end{figure}
In the following, the network representing the dynamical systems of the form \eqref{dyn11}-\eqref{dyn12}, is partition into three subsystems as $H_1$, $H_2$, and $H_3$ as shown in Fig. \ref{interconnect}.
The subsystem $H_1$ contains only primal variables, with $u_{H_1}$ and $y_{H_1}$ as its input and output respectively, as given below:
\begin{align}\small
H_1&:\begin{cases}
\dot{w}&=-w- Lw-L\alpha-\zeta,\\
\dot{b}&=- Lb-L\beta-\eta,\\
u_{H_1}&=-[(L\alpha)^T,(L\beta)^T,\zeta^T,\eta^T]^T,\\
y_{H_1}&=[w^T,b^T]^T.
\end{cases}\label{h1}
\end{align}
The subsystem $H_2$ contains only consensus-dual variables, with $u_{H_2}$ and $y_{H_2}$ as its input and output respectively, as given below:
\begin{align}
H_2&:\begin{cases}
\dot{\alpha}&=Lw,\\
\dot{\beta}&=Lb,\\
u_{H_2}&=[(Lw)^T,(Lb)^T]^T,\\
y_{H_2}&=[\alpha^T,\beta^T]^T.
\end{cases}\label{h2}
\end{align}
The subsystem $H_3$ contains the slack variable, and the dual variables corresponding to the inequality constraints, with $u_{H_3}$ and $y_{H_3}$ as its input and output respectively, as given below:
\begin{align}
H_3&:\begin{cases}
\dot{\theta}_{ji}=&[h_{ji}(\xi_{ji},w_j,b_j)]^+_{\theta_{ji}}\forall^{n_j}_{i=1},\forall^m_{j=1},\\
\dot{\mu}_{ji}=&[\xi_{ji}]^+_{\mu_{ji}}\forall^{n_j}_{i=1},\forall^m_{j=1},\\
\dot{\xi}_{ji}=&[-mC-\mu_{ji}+\theta_{ji}]^+_{\xi_{ji}}\forall^{n_j}_{i=1},\forall^m_{j=1},\\
u_{H_3} &= [w^T,b^T]^T,\\
y_{H_3} &= [\zeta^T,\eta^T]^T.
\end{cases}\label{h3}
\end{align}%%%%%%%%%%%%%%%%%%%%%%%%%%%%%%%%%%%%%%%%%%%%%%%%%%%%%%%%%%%%%%%%%%%%%%%%%%%IN
where $\zeta, \eta, \mu\in \Re^m$, and $\zeta_j = \sum_{i=1}^{n_j}\theta_{ji}(-y_{ji}x_{ji})$ with $\eta_j = \sum_{i=1}^{n_j}\theta_{ji}(-y_{ji})$.
\begin{remark}
It is seen from Fig. \ref{interconnect} that the subsystems $H_1$ and $H_2$ are feedback interconnected while the subsystem $H_3$ is feedback interconnected with the feedback structure of $H_1$ and $H_2$. Fig. \ref{interconnect} represents the entire network dynamics of distributed support vector machines.
\end{remark}
\subsection{Passivity based stability analysis of primal-dual dynamics}\label{passive_pd}
In what follows, the passivity properties of subsystems $H_1,H_2$, and $H_3$ are explored. Later, these properties will be used to prove that the network of dynamical systems represented in Fig. \ref{interconnect} is asymptotically convergent and stable. 

The passivity based stability analysis of these subsystems is presented in the subsequent sections.
\subsubsection{System $H_1$ is passive}
\begin{proposition}
	Assuming that the graph $G$ is connected and $f(w)$ is strictly convex, if there exist a pair $(w_{eq},b_{eq})$ that satisfy \eqref{kkt}, then the subsystem $H_1$ is passive with port variables $(\dot{\tilde{y}}_{H_1},\dot{u}_{H_1})$. Further, each solution of the dynamics in subsystem $H_1$ asymptotically converges to $(w_{eq},b_{eq})$ for ${u}_{H_1} = \mathbf{0}$.
\end{proposition}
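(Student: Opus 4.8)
The plan is to establish passivity through a Krasovskii-type storage function built from the time derivatives of the primal states, and then to extract the convergence claim from LaSalle's invariance principle once the input is switched off.

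First I would rewrite the $H_1$ dynamics in \eqref{h1} compactly as $\dot{w}=-(I+L)w+u_w$ and $\dot{b}=-Lb+u_b$, where $u_w$ collects the input blocks $-L\alpha$ and $-\zeta$ while $u_b$ collects $-L\beta$ and $-\eta$, so that $u_{H_1}$ is their concatenation and $\dot{\tilde{y}}_{H_1}=[\dot{w}^T,\dot{b}^T]^T$. I would identify the equilibrium $(w_{eq},b_{eq})$ as the point annihilating the right-hand sides; this stationarity is exactly the primal part $\nabla_w\mathcal{L}=\mathbf{0}$, $\nabla_b\mathcal{L}=\mathbf{0}$ of the KKT system \eqref{kkt}, so under the stated convexity of $f(w)$ and connectivity of $G$ the equilibrium is well-defined. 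Working in the shifted coordinates keeps the passivity statement equilibrium-independent.

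For passivity with the differentiated port pair I would take the candidate $S=\tfrac{1}{2}\|\dot{w}\|^2+\tfrac{1}{2}\|\dot{b}\|^2\geq 0$ and differentiate the dynamics once more to obtain $\ddot{w}=-(I+L)\dot{w}+\dot{u}_w$ and $\ddot{b}=-L\dot{b}+\dot{u}_b$. Computing $\dot{S}=\dot{w}^T\ddot{w}+\dot{b}^T\ddot{b}$ then gives $\dot{S}=-\dot{w}^T(I+L)\dot{w}-\dot{b}^TL\dot{b}+\dot{\tilde{y}}_{H_1}^T\dot{u}_{H_1}$. Since $G$ is connected we have $L\succeq 0$ and $I+L\succ 0$, so both quadratic forms are nonpositive and $\dot{S}\leq\dot{\tilde{y}}_{H_1}^T\dot{u}_{H_1}$, which is precisely passivity of $H_1$ with the claimed port variables. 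For the convergence assertion I would set $u_{H_1}=\mathbf{0}$, hence $\dot{u}_{H_1}=\mathbf{0}$ and $\dot{S}\leq 0$; with $S$ bounded below, LaSalle drives trajectories to the largest invariant set inside $\{\dot{S}=0\}$. There $\dot{w}^T(I+L)\dot{w}=0$ forces $\dot{w}=\mathbf{0}$ by positive definiteness, while $\dot{b}^TL\dot{b}=0$ forces $L\dot{b}=\mathbf{0}$; combined with $\dot{b}=-Lb\in\mathrm{range}(L)=(\ker L)^\perp$ this leaves only $\dot{b}=\mathbf{0}$, pinning the invariant set to $w=0$ and $b\in\ker L$, i.e. the consensus equilibrium $(w_{eq},b_{eq})$.

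I expect the main obstacle to be the convergence step rather than the passivity estimate, because $L$ is only positive semidefinite: $\dot{S}\leq 0$ yields no strict decrease along the $b$-coordinate, so the argument must go through the invariant-set characterization, using the orthogonality $\mathrm{range}(L)\perp\ker L$ to rule out persistent drift along the consensus direction. A secondary bookkeeping point is matching the four input blocks of $u_{H_1}$ against the two-block output $y_{H_1}$, so that the cross term $\dot{w}^T\dot{u}_w+\dot{b}^T\dot{u}_b$ reassembles cleanly into the single supply rate $\dot{\tilde{y}}_{H_1}^T\dot{u}_{H_1}$.
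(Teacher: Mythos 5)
Your proof follows the same route as the paper's: the identical Krasovskii storage function $\tfrac{1}{2}\|\dot w\|^2+\tfrac{1}{2}\|\dot b\|^2$, the same second differentiation of the dynamics, and the same identification of the supply rate $\dot{\tilde y}_{H_1}^T\dot u_{H_1}$ (your bookkeeping of the four input blocks against the duplicated output $\tilde y_{H_1}=[y_{H_1}^T,y_{H_1}^T]^T$ matches the paper's), so the passivity half is essentially the published argument. Where you genuinely diverge is the convergence step. The paper pushes past plain passivity to an output-strict-passivity/ISS-type estimate by applying Young's inequality to the cross term and replacing $\dot w^TL\dot w$ and $\dot b^TL\dot b$ with $\lambda_2(L)\|\dot w\|^2$ and $\lambda_2(L)\|\dot b\|^2$, then asserts convergence from $\dot V_{H_1}<0$ when $u_{H_1}=\mathbf{0}$; that lower bound on the quadratic forms is only valid for vectors orthogonal to $\ker L$, so the claimed strict decrease in the $b$-coordinate does not hold verbatim for components of $\dot b$ along the consensus direction. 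You instead stop at $\dot S\le \dot{\tilde y}_{H_1}^T\dot u_{H_1}$ and close the convergence claim with LaSalle, using $I+L\succ 0$ to force $\dot w=\mathbf{0}$ and the orthogonality $\mathrm{range}(L)\perp\ker L$ together with $\dot b=-Lb$ to force $\dot b=\mathbf{0}$. Your finish is the more careful of the two precisely because $L$ is only positive semidefinite; what you give up is the explicit $L_2$-gain bound $1/\gamma_{H_1}$ that the paper extracts and informally reuses when it assembles the interconnection of $H_1$, $H_2$, $H_3$. One small point worth flagging in your write-up: LaSalle requires boundedness of the trajectories, which your Krasovskii function (being a function of $\dot w,\dot b$ rather than of the state) does not provide by itself; with $u_{H_1}=\mathbf{0}$ the $H_1$ dynamics are linear and marginally stable so this is immediate, but it deserves a sentence, and the paper itself defers the boundedness issue to a separate lemma.
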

\begin{proof}
Consider a Krasovskii-Lyapunov storage function for $H_1$ as given below:
\begin{align}
\begin{aligned}
V_{H_1}(w,b) &= \frac{1}{2}\dot{w}^T\dot{w}+\frac{1}{2}\dot{b}^T\dot{b}.
\end{aligned}\label{vh1}
\end{align}
Differentiating \eqref{vh1} w.r.t. time yields,
\begin{align}
\begin{aligned}\small 
\dot{V}_{H_1}(w,b)
=&\dot{w}^T\ddot{w}+\dot{b}^T\ddot{b}  \\
=&\dot{w}^T \{-\dot{w}-L\dot{w}-L\dot{\alpha}-\dot{\zeta}\}  \\ 
&+ \dot{b}^T \{-L\dot{b}-L\dot{\beta}-\dot{\eta}\},\\
=&-\dot{w}^T\dot{w}-\dot{w}^TL\dot{w}-\dot{w}^TL\dot{\alpha}-\dot{w}^T\dot{\zeta}  \\
&-\dot{b}^TL\dot{b}-\dot{b}^TL\dot{\beta}-\dot{b}^T\dot{\eta}.
\end{aligned}\label{dh1}
\end{align}
In the following, \eqref{dh1} is used to provide a ISS-Lyapunov like inequality for $\dot{V}_{H_1}$.   
\begin{align}
\dot{V}_{H_1}(w,b)&=-\dot{w}^T\dot{w}-\dot{w}^TL\dot{w}-\dot{b}^TL\dot{b}+\dot{\tilde{y}}_{H_1}^T\dot{u}_{H_1},\label{dh11}\\
&\leq-\dot{w}^T\dot{w}-\dot{w}^TL\dot{w}-\dot{b}^TL\dot{b}+\frac{\gamma_{H_1}}{4}\norm{\dot{\tilde{y}}_{H_1}}^2\nonumber\\&~~~+\frac{1}{\gamma_{H_1}}\norm{\dot{u}_{H_1}}^2,\nonumber\\
%&\leq-\dot{w}^T\dot{w}-\dot{w}^TL\dot{w}-\dot{b}^TL\dot{b}+\frac{\gamma_{H_1}}{2}\norm{\dot{{y}}_{H_1}}^2\nonumber\\&~~~+\frac{1}{\gamma_{H_1}}\norm{\dot{u}_{H_1}}^2,\\
&\leq-(1+\lambda_2(L)-\frac{\gamma_{H_1}}{2})\norm{\dot{w}}^2-(\lambda_2(L)-\frac{\gamma_{H_1}}{2})\norm{\dot{b}}^2\nonumber\\&~~~+\frac{1}{\gamma_{H_1}}\norm{\dot{u}_{H_1}}^2 \label{dh22}
\end{align}
where $\tilde{y}_{H_1} = [y^T_{H_1},y^T_{H_1}]^T$. Further the $L_2$-gain of the system $H_1$, with both input and output measured in terms of the $L_2$ norm, is computed as follows:
Reconsider \eqref{dh22}, 
\begin{align}
\dot{V}_{H_1}(w,b)&\leq-\psi({V}_{H_1}(w,b))+\frac{1}{\gamma_{H_1}}\norm{\dot{u}_{H_1}}^2,\label{passh1}
\end{align} 
where, $\psi({V}_{H_1}(w,b))$ is a positive definite function of ${V}_{H_1}(w,b)$, and $\frac{1}{\gamma_{H_1}}$ is the $L_2$-gain of $H_1$ from the port input $\dot{u}_{H_1}$ to the port output $\dot{y}_{H_1}$. Further, $H_1$ is $L_2$ stable if the induced gain of $H_1$ is less than or equal to $\frac{1}{\gamma_{H_1}}$, with $\gamma_{H_1}\leq 2 \lambda_2(L)$, where $\lambda_2(L)>0$ as long as the graph $G$ is connected.
%\eqref{dh11} can be written as,
%\begin{align}
%V_{H_1}(w(\tau),b(\tau))-V_{H_1}(w(0),b(0)) \leq \int_{0}^{\tau} \dot{\tilde{y}}_{H_1}^T\dot{u}_{H_1}dt.\label{passh1}
%\end{align}

The inequality \eqref{passh1} implies that the subsystem $H_1$ is \quotes{output strictly passive (OSP)} with respect to the port variables $\dot{u}_{H_1}$ and $\dot{y}_{H_1}$. Secondly, from \eqref{dh11}, the primal solutions asymptotically converge to $(w_{eq},b_{eq})$, when $\dot{V}_{H_1}(w,b)<0$, for ${u}_{H_1} = \mathbf{0}$.
\end{proof}	
\subsubsection{System $H_2$ is passive}
\begin{proposition}
Assuming that the graph $G$ is connected and $f(w)$ is strictly convex, if there exists a pair $(\alpha_{eq},\beta_{eq})$ satisfying \eqref{kkt}, then the subsystem $H_2$ is passive with port variables $(\dot{y}_{H_2},\dot{u}_{H_2})$.
\end{proposition}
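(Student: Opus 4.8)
The plan is to establish the passivity of $H_2$ by mirroring the Krasovskii--Lyapunov construction already used for $H_1$, exploiting the fact that $H_2$ is structurally an integrator whose output derivative coincides with its input. First I would introduce the storage function
\begin{align}
V_{H_2}(\alpha,\beta)=\frac{1}{2}\dot{\alpha}^T\dot{\alpha}+\frac{1}{2}\dot{\beta}^T\dot{\beta},\label{vh2plan}
\end{align}
the natural counterpart of \eqref{vh1} expressed in the differentiated dual variables; it is positive semidefinite along trajectories and vanishes exactly at the consensus equilibrium where $Lw=Lb=\mathbf{0}$.

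The second step is to differentiate $V_{H_2}$ along the dynamics \eqref{h2}. Because $\dot{\alpha}=Lw$ and $\dot{\beta}=Lb$, a further differentiation gives $\ddot{\alpha}=L\dot{w}$ and $\ddot{\beta}=L\dot{b}$, so that
\begin{align}
\dot{V}_{H_2}(\alpha,\beta)=\dot{\alpha}^T\ddot{\alpha}+\dot{\beta}^T\ddot{\beta}=\dot{\alpha}^TL\dot{w}+\dot{\beta}^TL\dot{b}.\label{dh2plan}
\end{align}

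The decisive step is then to read off the right-hand side as the supply rate in the differentiated port variables. From \eqref{h2} we have $\dot{y}_{H_2}=[\dot{\alpha}^T,\dot{\beta}^T]^T$ and $\dot{u}_{H_2}=[(L\dot{w})^T,(L\dot{b})^T]^T$, hence $\dot{y}_{H_2}^T\dot{u}_{H_2}=\dot{\alpha}^TL\dot{w}+\dot{\beta}^TL\dot{b}$, which matches \eqref{dh2plan} exactly. This yields the lossless passivity identity
\begin{align}
\dot{V}_{H_2}(\alpha,\beta)=\dot{y}_{H_2}^T\dot{u}_{H_2},\label{passh2plan}
\end{align}
that is, the passivity inequality holding with equality.

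The computation is short, and I do not expect a genuine technical obstacle; the only subtlety is interpretive. Unlike $H_1$, whose storage-function derivative carries the negative definite terms $-\dot{w}^T\dot{w}-\dot{w}^TL\dot{w}-\dot{b}^TL\dot{b}$ and is therefore output strictly passive, the subsystem $H_2$ dissipates nothing and admits no strictly negative term, so it is merely (losslessly) passive, consistent with its integrator character. The point I would be most careful about is matching $\dot{V}_{H_2}$ exactly to the differentiated supply rate $\dot{y}_{H_2}^T\dot{u}_{H_2}$ and keeping the ordering of the two port blocks consistent with \eqref{h2}, since it is precisely this passivity balance between the output strictly passive block $H_1$ and the lossless block $H_2$ that the feedback interconnection of Fig.\ \ref{interconnect} will need for the subsequent asymptotic stability argument.
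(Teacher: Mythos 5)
Your proposal is correct and follows essentially the same route as the paper: the same Krasovskii-type storage function $\tfrac{1}{2}\dot{\alpha}^T\dot{\alpha}+\tfrac{1}{2}\dot{\beta}^T\dot{\beta}$, the same differentiation along \eqref{h2} giving $\dot{\alpha}^TL\dot{w}+\dot{\beta}^TL\dot{b}=\dot{y}_{H_2}^T\dot{u}_{H_2}$, and the same conclusion of (lossless) passivity in the differentiated port variables. Your added observation that $H_2$ is merely lossless rather than output strictly passive is consistent with the paper's subsequent interconnection argument.
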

\begin{proof}
Consider a Krasovskii-type storage function\cite{feijer2010stability} as given below:
\begin{align}
\dot{V}_{H_2}(\alpha,\beta) = \frac{1}{2}\dot{\alpha}^T\dot{\alpha}+\frac{1}{2}\dot{\beta}^T\dot{\beta}.\label{vh2}
\end{align}
Differentiating \eqref{vh2} w.r.t. time yields,
\begin{eqnarray}
\dot{V}_{H_2}(\alpha,\beta)
&=& \dot{\alpha}^T\ddot{\alpha}+\dot{\beta}^T\ddot{\beta}, \nonumber \\
&=& \dot{\alpha}^TL\dot{w}+\dot{\beta}^TL\dot{b}, \label{dh2}\\
&=&\dot{y}_{H_2}^T\dot{u}_{H_2}. \label{dh21}
\end{eqnarray}
Similar to \eqref{passh1}, the following can be defined,
\begin{align}
V_{H_2}(\alpha(\tau),\beta(\tau))-V_{H_2}(\alpha(0),\beta(0)) \leq \int_{0}^{\tau} \dot{y}_{H_2}^T\dot{u}_{H_2}dt.\label{passh2}
\end{align}
Hence, the subsystem $H_2$ is passive with respect to port variables $\dot{u}_{H_2}$ and $\dot{y}_{H_2}$. The dual solutions asymptotically converge to $(\alpha_{eq},\beta_{eq})$ when ${u}_{H_2}=\mathbf{0}$.
\end{proof}
\subsubsection{System $H_3$ is passive}
Let us proceed first with the dual variable $\theta_{ji}$, its dynamics can also be written as:
\begin{align}
\small \dot{\theta}_{ji} &=
\begin{cases}\small 
h_{ji}(w_j,b_j)~\textrm{if}~\theta_{ji}>0,\forall i,\\
\max\{0,h_{ji}(w_j,b_j)\}~\textrm{if}~\theta_{ji}=0.
\end{cases}\label{ineq}
\end{align}
\eqref{ineq} becomes discontinuous when $\theta_{ji}=0$ and $ h_{ji}(w_j,b_j)<0$. The value of $h_{ji}(w_j,b_j)^+$ switches from $h_{ji}(w_j,b_j)$ to $0$. To further clarify that, \eqref{ineq} is reformulated as given below.
\begin{align}
\small \theta_{ji} =\begin{cases}
h_{ji}(w_j,b_j)^+=h_{ji}(w_j,b_j),~\mathrm{if}~\theta_{ji}>0~\mathrm{or}~ h_{ji}(w_j,b_j)>0\\
h_{ji}(w_j,b_j)^+=0. 
\end{cases}\label{proj}
\end{align}
From \eqref{proj}, the projection is seen to be active for the second case. Let $\mathcal{I}_j= \{1,\ldots,n_j\}$ and $\sigma_{j}:[0,\infty) \rightarrow \mathcal{I}_j,\forall i \in \mathcal{I}_j,\forall j \in P$ be a switching signal. Then
\begin{align}\small
\sigma_{j}(t)=\{i|\theta_{ji}=0,h_{ji}(w_j,b_j)\leq 0,\forall i \in \mathcal{I}_j,\forall j \in P\}\label{switch}
\end{align} is valid for the active projection.
Considering \eqref{switch}, the inequality constraint dynamics given in \eqref{h3} takes the form of a switched dynamical system\cite{kosaraju2018stability}, as follows:
\begin{align}
\dot{\theta}_{ji}=h_{ji}(w_j,b_j,\sigma_{ji})=\begin{cases}
h_{ji}(w_j,b_j),\forall i \notin \sigma_{j}(t),\\
0,\forall i \in \sigma_{j}(t).  
\end{cases}\label{thetadot}
\end{align}
Notice that the dynamics of the dual variable $\mu_{ji}$ and the slack variable $\xi_{ji}$ can also be represented as the switched dynamical system:
\begin{align}
\dot{\mu}_{ji}=\begin{cases}
\xi_{ji},\forall i \notin \iota_{j}(t),\\
0,\forall i \in \iota_{j}(t),  
\end{cases}\label{mudot}
\end{align}
and,
\begin{align}
\dot{\xi}_{ji}=\begin{cases}
-mC-\mu_{ji}+\theta_{ji},\forall i \notin \rho_{j}(t),\\
0,\forall i \in \rho_{j}(t).  
\end{cases}\label{xidot}
\end{align}

where $\iota_{j}(t),\rho_{j}(t)$ are the switching signals corresponding to \eqref{mudot} and \eqref{xidot}, respectively.
\begin{proposition}
Let $\theta_{{ji}(eq)},\xi_{ji(eq)},\mu_{ji(eq)}\forall i,\forall j$ satisfy \eqref{kkt}, and $V_{H_3}(\theta,\mu,\xi,\sigma,\iota,\rho)$ be the Krasovskii-Lyapunov storage function associated with the system $H_3$; then the subsystem $H_3$ is passive with port input $\dot{u}_{H_3}$, and port output $\dot{y}_{H_3}$ for:
\begin{enumerate}
	\item each pair of switching instances $(\tau^+_\sigma,\tau^-_\sigma)$ corresponding to \eqref{thetadot}.
	\item each pair of switching instances $(\tau^+_\mu,\tau^-_\mu)$ corresponding to \eqref{mudot}.
	\item each pair of switching instances $(\tau^+_\rho,\tau^-_\rho)$ corresponding to \eqref{xidot}.
\end{enumerate} 
\end{proposition}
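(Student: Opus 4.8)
The plan is to follow the same Krasovskii--Lyapunov template used for $H_1$ and $H_2$, but now carefully accounting for the switching in \eqref{thetadot}--\eqref{xidot}. First I would fix the storage function
\begin{align}
V_{H_3}=\tfrac{1}{2}\dot{\theta}^T\dot{\theta}+\tfrac{1}{2}\dot{\mu}^T\dot{\mu}+\tfrac{1}{2}\dot{\xi}^T\dot{\xi},\nonumber
\end{align}
which vanishes precisely at the KKT point, where $\dot{\theta}=\dot{\mu}=\dot{\xi}=\mathbf{0}$. The strategy is two-fold: (i) show that \emph{within} any mode (between consecutive switching instants) the system is lossless, i.e. $\dot{V}_{H_3}=\dot{y}_{H_3}^T\dot{u}_{H_3}$; and (ii) show that $V_{H_3}$ does not increase \emph{across} any switching instant for each of the three signals $\sigma_j,\iota_j,\rho_j$. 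Combining (i) and (ii) and integrating yields $V_{H_3}(\tau)-V_{H_3}(0)\le\int_0^\tau\dot{y}_{H_3}^T\dot{u}_{H_3}\,dt$, which is exactly passivity with respect to $(\dot{u}_{H_3},\dot{y}_{H_3})$.

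For step (i), on a fixed mode the dynamics are smooth, so I would differentiate the active components using $\ddot{\theta}_{ji}=\dot{h}_{ji}=-\dot{\xi}_{ji}-y_{ji}(\dot{w}_jx_{ji}+\dot{b}_j)$, $\ddot{\mu}_{ji}=\dot{\xi}_{ji}$, and $\ddot{\xi}_{ji}=-\dot{\mu}_{ji}+\dot{\theta}_{ji}$. Substituting into $\dot{V}_{H_3}=\sum_{j,i}(\dot{\theta}_{ji}\ddot{\theta}_{ji}+\dot{\mu}_{ji}\ddot{\mu}_{ji}+\dot{\xi}_{ji}\ddot{\xi}_{ji})$, the internal cross terms cancel pairwise ($\dot{\theta}_{ji}\dot{\xi}_{ji}$ against $\dot{\xi}_{ji}\dot{\theta}_{ji}$, and $\dot{\mu}_{ji}\dot{\xi}_{ji}$ against $\dot{\xi}_{ji}\dot{\mu}_{ji}$), leaving only $-\sum_{j,i}\dot{\theta}_{ji}y_{ji}(\dot{w}_jx_{ji}+\dot{b}_j)$. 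Recognising $\dot{\zeta}_j=-\sum_i\dot{\theta}_{ji}y_{ji}x_{ji}$ and $\dot{\eta}_j=-\sum_i\dot{\theta}_{ji}y_{ji}$ from the definitions following \eqref{h3}, this collapses to $\dot{w}^T\dot{\zeta}+\dot{b}^T\dot{\eta}=\dot{y}_{H_3}^T\dot{u}_{H_3}$, the desired lossless supply rate.

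The crux of the argument is step (ii), and I expect the switching analysis to be the main obstacle. Because the projections keep the states $\theta,\mu,\xi$ continuous, $V_{H_3}$ can only jump through a discontinuity in the \emph{derivatives} at a switching instant, and I would treat each signal separately, as the proposition is stated. For a switch-on of $\sigma_j$ (an index $i$ entering the active-projection set), the condition in \eqref{switch} forces $h_{ji}\le 0$, so $\dot{\theta}_{ji}$ drops from $h_{ji}$ to $0$ and hence $\dot{\theta}_{ji}^2$ can only decrease; for the reverse switch-off, continuity of $h_{ji}$ gives $h_{ji}=0$ at the boundary, so $\dot{\theta}_{ji}$ is continuous there. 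The same dichotomy, driven by the clamped-to-zero branch of the projection, applies verbatim to $\iota_j$ in \eqref{mudot} (with $\xi_{ji}$ in place of $h_{ji}$) and to $\rho_j$ in \eqref{xidot} (with $-mC-\mu_{ji}+\theta_{ji}$ in that role). In every case the jump in the corresponding squared derivative is non-positive, so $V_{H_3}(\tau^+_{\bullet})\le V_{H_3}(\tau^-_{\bullet})$ at each of the three families of switching instants.

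Finally, I would stitch the pieces together: over any interval the integral of the lossless rate from step (i) accumulates, while step (ii) guarantees that no upward jumps are introduced at the switches, giving the integral passivity inequality above. The delicate points I would flag are verifying that the switching instants do not accumulate (so that mode-wise integration is legitimate) and confirming, by continuity, the boundary values $h_{ji}=0$ (resp. $\xi_{ji}=0$, resp. $-mC-\mu_{ji}+\theta_{ji}=0$) at each switch-off, since it is exactly these that rule out an increase in $V_{H_3}$.
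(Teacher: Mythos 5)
Your proposal is correct and follows essentially the same route as the paper: the Krasovskii storage function $V_{H_3}=\frac{1}{2}(\|\dot{\theta}\|^2+\|\dot{\mu}\|^2+\|\dot{\xi}\|^2)$ restricted to the inactive-projection indices, the pairwise cancellation of the $\dot{\theta}_{ji}\dot{\xi}_{ji}$ and $\dot{\mu}_{ji}\dot{\xi}_{ji}$ cross terms leaving the supply rate $\dot{w}^T\dot{\zeta}+\dot{b}^T\dot{\eta}=\dot{y}_{H_3}^T\dot{u}_{H_3}$, and the two-case non-increase argument at switching instants. If anything, you are slightly more careful than the paper, which only argues the non-increase explicitly for the $\sigma$ switches (dismissing $\iota$ and $\rho$ on the grounds that $\dot{V}_{H_3}$ does not depend on $\xi,\mu$), whereas you carry the same boundary-value dichotomy through all three switching signals and correctly flag the non-accumulation of switching times as an implicit assumption.
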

\begin{proof}
Let $V_{H_3}$ be defined as given below:
\begin{align}
V_{H_3} = \frac{1}{2}\sum_{j=1}^{m}\Bigg\{\sum_{i \notin \sigma_j}\dot{\theta}^2_{ji}+\sum_{i \notin \iota_j}\dot{\mu}^2_{ji}+\sum_{i \notin \rho_j}\dot{\xi}^2_{ji}\Bigg\}, \forall j\in P.\label{vh3}
\end{align}
Differentiating \eqref{vh3} with respect to time yields, 
\begin{align}
\dot{V}_{H_3}&=\sum_{j=1}^{m}\Bigg\{\sum_{i \notin \sigma_j}^{}\dot{\theta}_{ji}\ddot{\theta}_{ji}+\sum_{i \notin \rho_j}^{}\dot{\xi}_{ji}\ddot{\xi}_{ji}+\sum_{i \notin \iota_j}^{}\dot{\mu}_{ji}\ddot{\mu}_{ji}\Bigg \}\nonumber\\
&=\sum_{j=1}^{m}\Bigg\{\sum_{i \notin \sigma_j}^{}\dot{\theta}_{ji}[\nabla_{(w_j,b_j)}h_{ji}(w_j,b_j)]^T\small \left [{\begin{array}{c}
	\dot{\xi}_{ji}\\
	\dot{w}_j\\
	\dot{b}_j
	\end{array}}\right]\nonumber\\
&~~+\sum_{i \notin \rho_j}^{}\dot{\xi}_{ji}[\nabla_{(\mu_{ji},\theta_{ji})}\dot{\xi}_{ji}]^T\small \left [{\begin{array}{c}
	\dot{\mu}_{ji} \\
	\dot{\theta}_{ji}
	\end{array}}\right]\\
&~~+\sum_{i \notin \rho_j}^{}\dot{\mu}_{ji}\dot{\xi}_{ji}\Bigg \}\\
&\leq\sum_{j=1}^{m}-\dot{\xi}_j\dot{\theta}_j+\dot{\zeta}_j\dot{w}_j+\dot{\eta}_j\dot{b}_j-\dot{\xi}_j\dot{\mu}_j+\dot{\xi}_j\dot{\theta}_j+\dot{\xi}_j\dot{\mu}_j \label{hps}\\
&\leq\dot{w}^T\dot{\zeta}+\dot{b}^T\dot{\eta}\label{dh31-1}\\
&\leq\dot{y}_{H_3}^T\dot{u}_{H_3}.\label{dh31}
\end{align}
The inequality \eqref{dh31-1} clearly indicates that $\dot{V}_{H_3}$ does not depend on the variables $\xi, \mu$. Hence, the inequality \eqref{dh31} can be written only in terms of the variable $\theta_{ji}$ as:
\begin{align}
\sum_{j=1}^{m}\sum_{i \notin \sigma_j}^{}V_j(\theta_{ji}(\tau^+_\sigma))-V_j(\theta_{ji}(\tau^-_\sigma))\leq\int_{\tau^-_\sigma}^{\tau^+_\sigma}\dot{y}_{H_3}^T\dot{u}_{H_3}dt. \label{passivity}
\end{align} 
\eqref{passivity} ensures that the switched system \eqref{thetadot} represents a finite family of passive systems. However, it must be ensured that the Lyapunov function $V_{H_3}$ does not increase during the switching events. In line with this, the following two cases have been considered:
\begin{enumerate}
	\item It may happen for some $j \in P$ and corresponding constraint $i$ in \eqref{thetadot}, that the function $h_{ji}(w_j,b_j)$ goes from negative to positive through $0$. This will cause the Lyapunov function to change from $V_j(\theta_{ji}(\tau_\sigma^-))$ to $V_j(\theta_{ji}(\tau_\sigma^+))$. If that happens, the Lagrangian multiplier $\theta_{ji}>0$ will add a new term to $V_j(\theta_{ji}(\tau_\sigma))$. Since, $V_j(\theta_{ji}(\tau_\sigma))$ is continuous in time, \eqref{passivity} holds for $\tau <\tau_\sigma^-$ as well as $\tau>\tau_\sigma^+$. Hence, 
	\begin{align}
	V_j(\theta_{ji}(\tau_\sigma^+))=V_j(\theta_{ji}(\tau_\sigma^-)).
	\end{align}
	\item In this case the projection of $i^{th}$ constraint becomes active, i.e., $\theta_{ji}$ reaches to $0$ from a positive value for the $i^{th}$ constraint of the $j^{th}$ machine. Hence, the corresponding $i^{th}$ term of the Lyapunov function $V_j(\theta_{ji})$ will disappear since $i \in  \sigma(t)$. In turn, the following inequality will be satisfied.
	\begin{align}
	V_j(\theta_{ji}(\tau_\sigma^+))<V_j(\theta_{ji}(\tau_\sigma^-)).
	\end{align}
\end{enumerate}
Hence, in both the cases, the Lyapunov function $V_j(\theta_{ji}(\tau_\sigma))$ will be non-increasing. 
\end{proof}
\subsection{Stability analysis of the feedback interconnection shown in Fig \ref{interconnect}.}\label{gb}	
\begin{proposition}\label{prof3.4}
	The feedback interconnection of the subsystems $H_1,H_2$, and $H_3$ is passive and asymptotically stable.
\end{proposition}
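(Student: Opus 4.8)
The plan is to invoke the passivity theorem for feedback interconnections: since each of $H_1$, $H_2$, and $H_3$ has already been shown to be passive (indeed $H_1$ is output-strictly passive), the negative-feedback interconnection depicted in Fig.~\ref{interconnect} should itself be passive, and a composite storage function will then double as a Lyapunov certificate for asymptotic stability. First I would form the aggregate Krasovskii-Lyapunov function $V = V_{H_1} + V_{H_2} + V_{H_3}$ from the three storage functions in \eqref{vh1}, \eqref{vh2}, and \eqref{vh3}, so that $\dot V = \dot V_{H_1} + \dot V_{H_2} + \dot V_{H_3}$ along the interconnected trajectories.

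The decisive step is to show that the interconnection is power-preserving, i.e. the three port (supply-rate) terms cancel identically. Reading off the input--output pairs in \eqref{h1}--\eqref{h3} and using $L = L^T$, one has $\dot{\tilde y}_{H_1}^T\dot u_{H_1} = -\dot w^T L\dot\alpha - \dot b^T L\dot\beta - \dot w^T\dot\zeta - \dot b^T\dot\eta$, whereas $\dot y_{H_2}^T\dot u_{H_2} = \dot w^T L\dot\alpha + \dot b^T L\dot\beta$ and $\dot y_{H_3}^T\dot u_{H_3} = \dot w^T\dot\zeta + \dot b^T\dot\eta$. Summing the three contributions annihilates every cross term, so the only surviving part is the dissipation margin of $H_1$ from \eqref{dh11}, giving $\dot V \le -\dot w^T\dot w - \dot w^T L\dot w - \dot b^T L\dot b \le 0$, where the last inequality uses positive semidefiniteness of the Laplacian $L$ on the connected graph $G$. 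This single inequality simultaneously certifies passivity of the overall network and supplies a negative-semidefinite $\dot V$.

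To pass from stability to asymptotic stability I would apply LaSalle's invariance principle. On the set $\{\dot V = 0\}$ the term $-\dot w^T\dot w$ forces $\dot w = 0$, while $-\dot b^T L\dot b = 0$ forces $L\dot b = 0$; restricting to the largest invariant set and substituting $\dot w \equiv 0$ back into the primal--dual dynamics \eqref{dyn11}--\eqref{dyn12}, together with $\lambda_2(L) > 0$, should propagate the vanishing of $\dot\alpha,\dot\beta,\dot b,\dot\theta,\dot\mu,\dot\xi$, so that the invariant set collapses to the equilibrium satisfying the KKT conditions \eqref{kkt}. Because $H_3$ evolves as a switched system, this LaSalle argument has to be run in the hybrid setting: I would combine the continuous-time decrease above with the per-switch analysis already done for $H_3$, which shows that $V_{H_3}$ — and hence $V$ — never increases across a switching instant of \eqref{thetadot}--\eqref{xidot}, and then invoke the invariance principle for switched/hybrid systems of \cite{lygeros2003dynamical}.

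I expect the hybrid nature of $H_3$ to be the main obstacle. The routine part is the port cancellation; the delicate part is verifying that $V$ is a genuine common Lyapunov function that never jumps upward at the discontinuities of \eqref{thetadot}--\eqref{xidot}, and that the invariant-set characterization — in particular the reduction of the purely Laplacian-coupled $b$- and dual-dynamics to a single consensual equilibrium rather than merely to the consensus subspace $\ker L$ — survives the switching. This is precisely where the connectedness assumption ($\lambda_2(L) > 0$) and the careful ordering of switching instants established for $H_3$ become indispensable.
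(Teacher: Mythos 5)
Your proposal follows essentially the same route as the paper: the aggregate Krasovskii--Lyapunov function $V = V_{H_1}+V_{H_2}+V_{H_3}$, the cancellation of the three port supply rates leaving only the dissipation of $H_1$ so that $\dot V \le 0$, and the hybrid LaSalle invariance principle of \cite{lygeros2003dynamical} (together with non-increase of $V_{H_3}$ across switches and boundedness of trajectories) to conclude asymptotic convergence to the KKT point. Your added caution about $-\dot b^T L \dot b=0$ only forcing $\dot b\in\ker L$ is a legitimate refinement of a point the paper passes over, but it does not change the overall argument.
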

\begin{proof}
	Let $V$ be the candidate Lyapunov function for the interconnected system represented in Fig. \ref{interconnect}, as given below:
\begin{align}
V = V_{H_1}+V_{H_2}+V_{H_3}. \label{ly}
\end{align}
Then,
\begin{align}\small
\dot{V} &= \dot{V}_{H_1}+\dot{V}_{H_2}+\dot{V}_{H_3},\nonumber\\
&=-\dot{w}^T\dot{w}-\dot{w}^TL\dot{w}-\dot{b}^TL\dot{b}\nonumber\\
&\underbrace{-\dot{\tilde{y}}_{H_1}^T\dot{u}_{H_1}+\dot{y}_{H_2}^T\dot{u}_{H_2}+\dot{y}_{H_3}^T\dot{u}_{H_3}},\nonumber\\
&\leq -(1+\lambda_2(L))\norm{\dot{w}}^2-\lambda_2(L)\norm{\dot{b}}^2+{0}, \nonumber\\
&\leq 0.\label{hwz}
\end{align}
Since $L$ is a Laplacian matrix of the connected graph $G$, $\lambda_2(L)>0$ always holds. Hence $\dot{V}\leq 0$. 
\begin{remark}
	The expressions \eqref{passh1}, \eqref{passh2}, \eqref{passivity}, and \eqref{hwz} prove that the feedback interconnection of the subsystems $H_1,H_2$, and $H_3$ is passive\cite{khalil1996noninear}. Note that from \eqref{h1}-\eqref{h3}, 
	\begin{align}
	\underbrace{\dot{\tilde{y}}_{H_1}^T\dot{u}_{H_1}+\dot{y}_{H_2}^T\dot{u}_{H_2}+\dot{y}_{H_3}^T\dot{u}_{H_3}}&=\dot{\tilde{y}}_{H_1}^T\dot{u}_{H_1}-\dot{\tilde{y}}_{H_1}^T\dot{u}_{H_1}\nonumber\\
	&=0.
	\end{align}
	\eqref{hwz} reveals an interesting fact that the asymptotic convergence and stability of the proposed algorithm relies only on the primal dynamics. 
\end{remark}

The following result helps to establish the boundedness of the trajectories of \eqref{dyn11}-\eqref{dyn12}.
\begin{lemma}
The trajectories of \eqref{dyn11}-\eqref{dyn12} are bounded for any finite initial conditions.
\end{lemma}
\begin{proof}
    The proof of this Lemma is given in \cite{yi2015distributed,kosaraju2018stability}, hence omitted from this work.
\end{proof}

The extension of LaSalle invariance principle for hybrid dynamical systems \cite{lygeros2003dynamical}, is stated below, which in our case provides a useful result on the convergence of primal-dual dynamics \eqref{h1}-\eqref{h3} to the solution of optimal solution that satisfies \eqref{kkt}. Without loss of generality, the network dynamics \eqref{h1}-\eqref{h3}, is now considered as a hybrid system.
\begin{lemma}\label{hyb}
Consider the hybrid dynamical system \eqref{h1}-\eqref{h3}. Let $\Psi$ be a compact, positively invariant set. Assuming that the Lyapunov function $V$ defined in \eqref{ly} is continuously differentiable and $\dot{V} \leq 0$ along the trajectories of $(w(t),b(t),\xi(t),\theta(t),\mu(t),\alpha(t),\beta(t)) \in \Psi$, every trajectory in $\Psi$ converges to $\epsilon$, where $\epsilon \subset \Psi$ is a maximal positive invariant set of $\Psi$ such that
\begin{enumerate}
    \item $\dot{V}=0$ for a fixed $\sigma$.
    \item $V_j(\theta_{ji}(\tau_\sigma^+))=V_j(\theta_{ji}(\tau_\sigma^-))$ for a switching instance $t$ between $\sigma^-$ and $\sigma^+$.
\end{enumerate}
\end{lemma}
\end{proof}
Lemma \ref{hyb} gives the next result on the convergence of primal-dual dynamics \eqref{dyn11}-\eqref{dyn12} to the optimal solutions that satisfies the conditions in \eqref{kkt}.
\begin{theorem}\label{thmn-1}
The hybrid dynamical system \eqref{h1}-\eqref{h3} converges to the optimal solutions $w^*,b^*,\xi^*,\theta^*,\mu^*,\alpha^*,\beta^*$ satisfying \eqref{kkt}.
\end{theorem}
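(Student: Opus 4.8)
The plan is to combine the dissipation inequality furnished by Proposition~\ref{prof3.4} with the hybrid invariance principle of Lemma~\ref{hyb}, and then identify the resulting invariant set with the KKT system~\eqref{kkt}. First I would invoke the preceding boundedness lemma to fix a compact, positively invariant set $\Psi$ that contains the trajectory for any finite initial data (for instance a sublevel set of $V$ intersected with the bounded reachable region). Since Proposition~\ref{prof3.4} already gives $\dot{V}\leq 0$ on $\Psi$, and the $H_3$ analysis guarantees via~\eqref{passivity} that the storage does not increase across switching instances, the hypotheses of Lemma~\ref{hyb} are satisfied. Hence every trajectory converges to the largest invariant set $\epsilon\subset\Psi$ on which $\dot{V}=0$ for a fixed $\sigma$ and $V_j(\theta_{ji}(\tau_\sigma^+))=V_j(\theta_{ji}(\tau_\sigma^-))$ at each switch.

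Next I would characterize $\epsilon$. From the estimate~\eqref{hwz}, $\dot{V}=0$ together with $\lambda_2(L)>0$ (the graph $G$ being connected) forces $\dot{w}=\mathbf{0}$ and $\dot{b}=\mathbf{0}$ on $\epsilon$. The crux is to propagate this to a \emph{full} equilibrium of~\eqref{dyn11}--\eqref{dyn12}. Because $\epsilon$ is invariant, $w$ and $b$ are constant along it, so from $H_2$ in~\eqref{h2} the rates $\dot{\alpha}=Lw$ and $\dot{\beta}=Lb$ are constant; boundedness of $\alpha,\beta$ inside $\Psi$ then excludes any nonzero constant rate, yielding $Lw=\mathbf{0}$ and $Lb=\mathbf{0}$, i.e.\ the consensus conditions $w_j=w_l$, $b_j=b_l$ of~\eqref{kkt}. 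Differentiating the $H_1$ relations in~\eqref{h1} and substituting $\dot{w}=\dot{b}=\mathbf{0}$, $\dot{\alpha}=\dot{\beta}=\mathbf{0}$ gives $\dot{\zeta}=\mathbf{0}$ and $\dot{\eta}=\mathbf{0}$; feeding this through the $V_{H_3}$ computation~\eqref{vh3}--\eqref{dh31}, whose value must vanish on $\epsilon$, I would conclude $\dot{\theta}_{ji}=\dot{\mu}_{ji}=\dot{\xi}_{ji}=0$, so the trajectory rests at an equilibrium of the projected dynamics.

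Finally I would show that any such equilibrium satisfies~\eqref{kkt}. Setting the right-hand sides of~\eqref{dyn11}--\eqref{dyn12} to zero, the stationarity of $\dot{w}$ and $\dot{b}$ reproduces $\nabla\mathcal{L}=\mathbf{0}$; the projected equilibrium conditions $[h_{ji}]^+_{\theta_{ji}}=0$ and $[\xi_{ji}]^+_{\mu_{ji}}=0$ encode primal feasibility $h_{ji}\leq 0$, the multiplier signs $\theta_{ji},\mu_{ji}\geq 0$, and the complementary-slackness relations $\theta_{ji}h_{ji}=0$, $\xi_{ji}\mu_{ji}=0$; and $\dot{\alpha}=\dot{\beta}=\mathbf{0}$ supplies the consensus equalities. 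Matching these against~\eqref{kkt} identifies the equilibrium with the tuple $(w^*,b^*,\xi^*,\theta^*,\mu^*,\alpha^*,\beta^*)$, which is a global optimum by convexity of~\eqref{dist} and the strong-duality assumption, with the primal part unique by strict convexity of the objective.

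The \textbf{main obstacle} I anticipate is the propagation step on $\epsilon$: translating $\dot{w}=\dot{b}=\mathbf{0}$ into the vanishing of the \emph{switched} rates $\dot{\theta},\dot{\mu},\dot{\xi}$ requires care with the projection boundaries and the switching signals $\sigma,\iota,\rho$, since $\zeta_j=\sum_{i}\theta_{ji}(-y_{ji}x_{ji})$ couples the $\theta$-dynamics only through a weighted sum, so $\dot{\zeta}=\mathbf{0}$ does not \emph{a priori} force each $\dot{\theta}_{ji}=0$. Making this rigorous means leaning on the second LaSalle condition of Lemma~\ref{hyb} at the switching instants together with the non-increase property recorded in~\eqref{passivity}, so as to rule out nontrivial sliding or chattering on the projection faces and guarantee that $\epsilon$ consists precisely of genuine equilibria.
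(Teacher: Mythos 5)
Your overall strategy is the same as the paper's: invoke the boundedness lemma and Lemma~\ref{hyb} to confine the trajectory to the largest invariant set $\epsilon$ with $\dot{V}=0$, characterize $\epsilon$, and match the resulting stationarity conditions against \eqref{kkt}. In places you are actually more explicit than the paper: the step where constancy of $\dot{\alpha}=Lw$, $\dot{\beta}=Lb$ plus boundedness of $\alpha,\beta$ forces $Lw=Lb=\mathbf{0}$ is only asserted in the paper (``simultaneously, the dual variables also reach consensus''), so writing it out is a genuine improvement. The one place where your route diverges is exactly the obstacle you flag, and the paper avoids it rather than solving it your way: instead of passing from $\ddot{w}=\mathbf{0}$ to $\dot{\zeta}=\mathbf{0}$ and then trying to deduce each $\dot{\theta}_{ji}=0$ from the weighted sum $\zeta_j=\sum_i\theta_{ji}(-y_{ji}x_{ji})$ (which indeed fails without linear independence of the $y_{ji}x_{ji}$), the paper argues componentwise directly on the $H_3$ dynamics: on $\epsilon$ the pair $(w,b)$ is frozen at $(w^*,b^*)$, so $\dot{\theta}_{ji}=[h_{ji}(\xi_{ji},w^*_j,b^*_j)]^+_{\theta_{ji}}$; if $h_{ji}<0$ the projection gives $\theta^*_{ji}=0$, while $h_{ji}>0$ would drive $\theta_{ji}$ unboundedly, contradicting boundedness of the trajectories, hence $\dot{\theta}_{ji}=0$ for every $i,j$, and the same reasoning handles $\mu_{ji}$ and then $\xi_{ji}$. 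You should replace your $\dot{\zeta}$-based step with this componentwise boundedness contradiction; it closes the gap you identified without any assumption on the data vectors. (Be aware that even this argument is informal about the residual coupling of $h_{ji}$ to the still-evolving $\xi_{ji}$; the paper does not address that either.) The final identification of the equilibrium with \eqref{kkt} via the projected stationarity conditions and strong duality is the same in both treatments.
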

\begin{proof}
From Lemma \ref{hyb}, for a fixed $\sigma$, $\dot{V}=0$. Thus the primal dynamics in \eqref{h1} converges to the optimal solution of \eqref{dist} when the trajectories of $w_j(t),b_j(t),\forall j \in P$ reach the set $\epsilon$ and converge to, i.e. $w_j=w_l=w^*,b_j=b_l=b^*,\forall j,l \in P$. Simultaneously, the dual variables also reach consensus, i.e. $\alpha_j=\alpha_l=\alpha^*,\beta_j=\beta_l=\beta^*,\forall j,l \in P$. If $h_{ji}(w^*_j,b^*_j,\xi^*_{ji})<0$ then $\theta^*_{ji}=0$. However, if $h_{ji}(w^*_j,b^*_j)>0$, then $\theta^*_{ji}$ will penalize the constraint violation by approaching a very large value. Since all trajectories are bounded, it contradicts the continuity of $V$, thus $\dot{\theta}_{ji}=0,\forall j \in P, \forall i = 1, \ldots, n_j$. This holds for $\mu_{ji}$ too. Thus slack variable $\xi_{ji}$ approaches $\xi^*_{ji}$ as $\theta_{ji}$ and $\mu_{ji}$ approach $\theta^*_{ji}$ and $\mu^*_{ji}$, respectively. Thus, $[\dot{w}^T,\dot{b}^T,\dot{\alpha}^T,\dot{\beta}^T]^T=\mathbf{0}$, where $\mathbf{0}$ is a zero vector of the appropriate dimensions. To this end, the fixed point solutions of \eqref{h1}-\eqref{h3} also satisfy the KKT conditions \eqref{kkt} and yield the optimal solutions of \eqref{dist} and \eqref{dual}.
\end{proof}
\begin{theorem}\label{thmn}
The optimal solution of \eqref{lagr} is asymptotically stable. 
\end{theorem}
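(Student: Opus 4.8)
The plan is to read asymptotic stability as the conjunction of Lyapunov stability and attractivity of the optimal point, and to obtain each from ingredients already assembled: the composite Krasovskii storage function $V=V_{H_1}+V_{H_2}+V_{H_3}$ of \eqref{ly}, the dissipation inequality $\dot V\le 0$ of \eqref{hwz}, the boundedness Lemma, the hybrid LaSalle principle of Lemma \ref{hyb}, and the limit-set characterization of Theorem \ref{thmn-1}.

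First I would note that $V$ is a nonnegative function that vanishes precisely when every velocity coordinate $(\dot w,\dot b,\dot\alpha,\dot\beta,\dot\theta,\dot\mu,\dot\xi)$ is zero, i.e. exactly at an equilibrium of \eqref{dyn11}-\eqref{dyn12}. Together with $\dot V\le 0$ along solutions (valid for each fixed switching mode, and non-increasing across the switches of $H_3$ by the two cases analyzed in support of Proposition \ref{prof3.4}), this yields stability in the sense of Lyapunov: $V$ is a valid nonincreasing certificate, so trajectories initialized near the equilibrium remain near it.

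For attractivity I would invoke the boundedness Lemma to place the trajectory inside a compact positively invariant set $\Psi$, and then apply Lemma \ref{hyb}, which forces convergence to the largest invariant set $\epsilon\subset\Psi$ on which $\dot V=0$ for fixed $\sigma$ and $V$ is continuous across switches. The decisive algebraic observation, extracted from the chain leading to \eqref{hwz}, is $\dot V\le-(1+\lambda_2(L))\norm{\dot w}^2-\lambda_2(L)\norm{\dot b}^2$; since the graph $G$ is connected, $\lambda_2(L)>0$, so $\dot V=0$ forces $\dot w=\dot b=\mathbf{0}$ on $\epsilon$.

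The main obstacle is that $V$ is a Krasovskii-type function of the velocities, so $\dot V=0$ by itself only localizes the trajectory to the zero-velocity manifold of $(w,b)$ rather than to the single optimal primal-dual point; the genuine work is to propagate $\dot w\equiv\dot b\equiv\mathbf{0}$ through the interconnection. On $\epsilon$ one has $\ddot w=\ddot b=\mathbf{0}$, so \eqref{h1} forces $L\dot\alpha+\dot\zeta=\mathbf{0}$ and $L\dot\beta+\dot\eta=\mathbf{0}$, while $H_2$ supplies $\dot\alpha=Lw$ and $\dot\beta=Lb$; connectedness of $G$ (so that $\ker L$ is the consensus subspace) together with the complementarity and switching analysis already carried out for $H_3$ in Theorem \ref{thmn-1} then annihilates the remaining velocities and pins the state to the KKT system \eqref{kkt}. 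Hence $\epsilon$ is exactly the optimal solution of \eqref{lagr}, and combining this attractivity with the Lyapunov stability established above yields asymptotic stability, as claimed.
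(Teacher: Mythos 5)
Your proposal is correct and follows essentially the same route as the paper, whose entire proof of this theorem is the single sentence that the result is ``straightforward from Proposition \ref{prof3.4} and Theorem \ref{thmn-1}.'' You have simply unpacked that citation into its constituents --- the dissipation inequality \eqref{hwz} for Lyapunov stability, and the boundedness lemma plus hybrid LaSalle plus the limit-set argument of Theorem \ref{thmn-1} for attractivity --- and in doing so you correctly flag (and resolve, via the interconnection) the genuine subtlety that the Krasovskii function only vanishes on the zero-velocity manifold, a point the paper itself glosses over.
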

\begin{proof}
%It has been established in Theorem \ref{thmn-1} that the trajectories of \eqref{h1}-\eqref{h3} asymptotically converge to the optimal solution $w^*,b^*,\theta^*,\alpha^*,\beta^*$, when $\dot{V}=0$. \eqref{hwz} implies that $\dot{V}<0$ everywhere else.  
The proof is straightforward from Proposition \ref{prof3.4} and Theorem \ref{thmn-1}.
\end{proof}

\begin{remark}
	From inequalities \eqref{passh1}, \eqref{passh2}, and \eqref{passivity}, it is apparent that the network dynamics comprising \eqref{h1}-\eqref{h3} is passive, and inherently robust to perturbations arising in primal and dual variables [see, Proposition 4.3.1, Remark 4.3.3 \cite{van2000l2}].  
\end{remark}
\section{Conclusions}\label{concl}
The paper primarily focuses on continuous-time primal-dual gradient flow algorithm for the distributed support vector machines with the case of horizontally partitioned large dataset. It is proved that the algorithm is passive and asymptotically convergent. Using hybrid LaSalle invariance principle, it is proved that the optimal solution is asymptotically stable.
\bibliographystyle{unsrt}
\bibliography{dsvm_pd} 
\end{document}